\documentclass[11pt]{article}
\usepackage[utf8]{inputenc}
\usepackage{graphicx, color}
\usepackage{mymacro}
\usepackage{wasysym}
\usepackage[all]{xy}
\usepackage{typearea}
\renewcommand{\Sh}{\mathrm{Sh}}

\renewcommand{\Mod}{\mathrm{Mod}}

\newcommand{\Perf}{\mathrm{Perf}}

\newcommand{\Spc}{\mathrm{Spc}}
\newcommand{\Mon}{\mathrm{Mon}}

\title{A remark on monoidal structure \\and homological mirror symmetry}
\author{Tatsuki Kuwagaki}
\date{\today}

\begin{document}

\maketitle
\begin{abstract}
For a symplectic geometry $X$, suppose the (derived) Fukaya category $\mathrm{Fuk}(X)$ of $X$ is equipped with a monoidal structure. Then its Balmer spectrum recovers a mirror $Y$ of $X$ if there exists homological mirror symmetry $\mathrm{Fuk}(X)\cong D^b\mathrm{coh}(Y)$ and the monoidal structure is the mirror of the standard one of $D^b\mathrm{coh}(Y)$. In this short note, we fill one gap of this story in the literature: we show that the monoidal structure determines the homological mirror functor $\mathrm{Fuk}(X)\to D^b\mathrm{coh}(Y)$.
\end{abstract}

\section{Introduction}
\subsection{Monoidal structure and homological mirror symmetry}
Let $X$ be a symplectic geometry with some data to define its (derived) Fukaya category $\Fuk(X)$ over a fixed base field $\bK$. Suppose $X$ satisfies ``homological mirror symmetry"~\cite{KontsevichHMS} i.e., there exists a smooth algebraic variety $Y$ over $\bK$ such that its derived category of coherent sheaves $D^b\coh(Y)$ is equivalent to $\Fuk(X)$. Since $D^b\coh(Y)$ is equipped with a monoidal structure induced by tensor over the structure sheaf, the Fukaya category is given a monoidal structure induced by that of $D^b\coh(Y)$. Note that there can be non-isomorphic $Y$'s whose $D^b\coh$ are equivalent to $\Fuk(X)$. The monoidal structures on $\Fuk(X)$ induced by non-isomorphic $Y$'s are indeed non-isomorphic by Balmer's reconstruction theorem~\cite{Balmer}.

Hence $\Fuk(X)$ may have several monoidal structures. In other words, there is no unique canonical monoidal structure in general. What is the geometric meaning of this non-canonicity? An expected picture is the following: In the situation of mirror symmetry, $X$ admits a Lagrangian torus fibration (SYZ fibration) $\pi\colon X\to B$ whose dual fibration $\pi^\vee\colon Y\to B$ gives a mirror geometry $Y$~\cite{SYZ}. So, different fibrations give different mirrors. Moreover, it is expected that the fiberwise addition on the torus fibration induces a monoidal structure on $\Fuk(X)$ which is the mirror of the standard monoidal structure on $D^b\coh(Y)$. Some progress in this direction are made e.g., in \cite{Subotic, AbouzaidBottmanNiu}.

The SYZ fibration $\pi\colon X\to B$ is also expected to give a homological mirror symmetry functor~\cite{Fukayafamilty, Abouzaidfamilyfloer}: the family Floer mirror functor. Roughly speaking, a point $y$ in $Y$ over $b\in B$ corresponds to an object $L_y$ of $\Fuk(X)$ supported on $\pi^{-1}(b)$. Then, for an object $L\in \Fuk(X)$, the stalk of mirror coherent sheaf $\cE$ at $y$ is given by $\Hom_{\Fuk(X)}(L_y, L)$.

Now we have the following diagram of implications:
\begin{equation}\label{diagram}
    \xymatrix{
    &\text{SYZ fibration}\ar[rd]^{\text{Family Floer functor}
    }\ar[ld]_{\text{Fiberwise addition}}&\\
    \text{Monoidal structure on $\Fuk(X)$}&&\ar[ll]^{\text{pulling-back the monoidal structure of $D^b\coh(Y)$}} \substack{\text{Homological mirror symmetry functor}\\  \Fuk(X)\to D^b\coh(Y)}
    }
\end{equation}
The diagram is expected to be commutative. In the case of toric mirror symmetry, it is checked in the context of the coherent-constructible correspondence~\cite{FLTZ1, FLTZ2, KuwagakiCCC}.\footnote{Although it is not given by the family Floer functor, the relevant functor has expected properties.}

In this short note, our main claim is the following:
\begin{claim}
    The horizontal arrow in (\ref{diagram}) is invertible.
\end{claim}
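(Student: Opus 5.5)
We read the claim as follows. Suppose $\Phi\colon \Fuk(X)\to D^b\coh(Y)$ is an equivalence, and let $\otimes_\Phi$ be the monoidal structure on $\Fuk(X)$ obtained by pulling back $\otimes_{\cO_Y}$. Then the pair $(Y,\Phi)$ can be recovered from $(\Fuk(X),\otimes_\Phi)$, up to a canonical isomorphism, and this recovery is inverse to pulling back.

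The plan is to run Balmer's reconstruction functorially and pay attention to the equivalence itself, not just to the space. We use the following theorem of Balmer~\cite{Balmer}. For a tensor triangulated category $(\mathcal{T},\otimes)$ there is a locally ringed space $\Spc(\mathcal{T})$, whose structure sheaf is built from endomorphisms of the unit localized at supports. For $\mathcal{T}=D^b\coh(Y)$ with $Y$ smooth, so that $D^b\coh(Y)=\Perf(Y)$, there is a canonical isomorphism $Y\cong \Spc(\Perf(Y))$. It sends $y$ to the prime $\{\cE : \cE_y\simeq 0\}$.

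\textbf{Step 1: the inverse construction.} Given a monoidal structure $\otimes$ on $\Fuk(X)$ for which some $Y$ and $\Phi$ exist, set $Y_\otimes:=\Spc(\Fuk(X),\otimes)$. The tensor equivalence $\Phi$ induces an isomorphism $\Spc(\Phi)\colon Y\cong Y_\otimes$. We then need a functor $\Psi_\otimes\colon\Fuk(X)\to D^b\coh(Y_\otimes)$ defined intrinsically from $\otimes$. We would define it object by object through the Balmer structure sheaf. To $L$ we assign the sheaf on $Y_\otimes$ associated to the presheaf
\[
U\longmapsto \Hom_{\Fuk(X)/\mathcal{T}_Z}(\mathbf{1},L),
\]
with $Z=Y_\otimes\setminus U$ and $\mathcal{T}_Z$ the thick tensor ideal of objects supported on $Z$. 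Equivalently, we take the Verdier localizations $\Fuk(X)_U$ and the internal Hom out of the unit. The complex-level version should use the dg enhancement, so that $\Psi_\otimes(L)$ is an object of $D^b\coh$ and not just a collection of cohomology sheaves.

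\textbf{Step 2: compatibility.} We check that $\Spc(\Phi)_*\circ\Phi\simeq\Psi_\otimes$. The input is that on $Y$ the localization $\Perf(Y)/\Perf_Z(Y)$ is equivalent to $\Perf(U)$ (Thomason–Trobaugh). Under this equivalence $\Hom(\cO,\cE)$ becomes $R\Gamma(U,\cE)$, and sheafifying recovers $\cE$. Since $\Phi$ is monoidal, it carries $\mathbf 1$ to $\cO_Y$ and intertwines the ideals $\mathcal{T}_Z$. So the two constructions agree naturally in $L$. In particular $\Phi$ is determined by $\otimes$ up to the automorphism of $Y$ coming from the identification, and both composites of the horizontal arrow with this construction are the identity up to isomorphism.

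\textbf{Expected obstacle.} The main difficulty is making $\Psi_\otimes$ an honest functor to $D^b\coh$ rather than a construction on homotopy categories. Sheafifying cohomology loses gluing data, and in any case $\Phi$ is only determined up to tensor autoequivalences of $(D^b\coh(Y),\otimes)$ fixing the unit. We would first work with dg-enhanced monoidal structures and use the fact that the monoidal unit fixes the normalization. A tensor autoequivalence of $\Perf(Y)$ is pullback along an automorphism of $Y$, since twisting by a line bundle does not preserve $\cO$ as unit. Hence the ambiguity is exactly the automorphism group of $Y$, and this is what ``determines'' should mean in the claim.
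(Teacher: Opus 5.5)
Your proposal is correct and follows essentially the same route as the paper. Your $\Psi_\otimes$ is the paper's canonical functor $m_{\cC}$: mapping complexes out of the unit in the enhanced Verdier quotients $\cC_U=\cC/\bigcap_{\cP\in U}\cP$ (which is your $\cT_Z$ with $Z=U^c$), then sheafified; your Step 2 is the paper's commutative triangle $m_{\Fuk(X),\otimes_F}\simeq m_{D^b\coh(Y),\otimes}\circ F$ together with the fact that for $\Perf(Y)$ the sheafification of $U\mapsto \Hom(\cO_U,\cE_U)$ is $\cE$, with the paper additionally stating the hypotheses (hypercompleteness, classical higher structure sheaf, both automatic for Noetherian $Y$) needed to land in $D(\cO)$ and not needing your unproved side claim classifying tensor autoequivalences of $\Perf(Y)$.
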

We will explain the precise meaning of the claim in the rest of the introduction.

\subsection{Main result}
Our main result is a slight refinement of Balmer's construction~\cite{Balmer} in the enhanced setting. Let $\cT$ be a $\bK$-linear tensor triangulated category with an enhancement. From the homotopy category of $\cT$, one can produce a ringed space $(\Spc_{\otimes}(\cT), \cO_{\Spc_{\otimes}(\cT)})$, called \emph{Balmer spectrum}. 
If $\cT$ is the derived category of perfect complexes over some Noetherian scheme $Y$, with the standard monoidal structure, its Balmer spectrum recovers $Y$.

We denote the derived category of $\cO_{\Spc_{\otimes}(\cT)}$-modules over $\Spc_{\otimes}(\cT)$ by $D(\cO_{\Spc_{\otimes}(\cT)})$.

\begin{theorem}\label{theorem:Balmer_functor}
Suppose $\Spc_\otimes(\cT)$ is hypercomplete and the higher structure sheaf is classical. 
Then there exists a canonical functor $m_{\cT, \otimes} \colon \cT\to D(\cO_{\Spc_{\otimes}(\cT)})$.

If $\cT$ is the derived category of perfect complexes over some Noetherian scheme, with the standard monoidal structure, $m_{\cT,\otimes}$ is the standard inclusion functor.

\end{theorem}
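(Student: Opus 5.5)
The plan is to build $m_{\cT,\otimes}$ as a global section of a sheaf of $\infty$-categories on $\Spc_\otimes(\cT)$, then compose with a sheafified Yoneda-type map landing in modules over the structure sheaf. Recall Balmer's construction. For a quasi-compact open $U\subset \Spc_\otimes(\cT)$ with closed complement $Z=\Spc_\otimes(\cT)\setminus U$, let $\cT_Z$ be the thick tensor ideal of objects supported in $Z$. Put $\cT|_U:=(\cT/\cT_Z)^\natural$, the idempotent completion of the Verdier quotient. Then $\cO_{\Spc_\otimes(\cT)}$ is the sheafification of $U\mapsto \End_{\cT|_U}(\mathbb 1)$. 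In the enhanced setting I would do the same with dg/stable $\infty$-categorical quotients. This gives a presheaf of $\bK$-linear stable $\infty$-categories $U\mapsto \cT|_U$. It also gives a presheaf of $E_\infty$-algebras $U\mapsto \mathcal{E}nd_{\cT|_U}(\mathbb 1)$, the derived endomorphisms of the unit. The sheafification of the latter is the \emph{higher structure sheaf}. The hypothesis that it is classical says this sheaf of $E_\infty$-rings is discrete, with $\pi_0$ equal to $\cO_{\Spc_\otimes(\cT)}$.

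Next, for each $U$ I define a functor $\cT\to \Mod(\mathcal{E}nd_{\cT|_U}(\mathbb 1))$ by $E\mapsto \Hom_{\cT|_U}(\mathbb 1, E|_U)$, the derived mapping spectrum of $\cT|_U$. It is naturally a module over the unit's endomorphism algebra. These maps are compatible with the restrictions $V\subset U$ because localization is symmetric monoidal and preserves $\mathbb 1$. So they assemble into a functor from $\cT$ to presheaves of modules over the presheaf of $E_\infty$-rings. Composing with sheafification gives a functor into sheaves of modules over the higher structure sheaf. By the classicality hypothesis this is $\Sh(\Spc_\otimes(\cT);\Mod(\cO))$, i.e.\ $\cO$-module spectra on the space.

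The remaining step is to identify this $\infty$-category with $D(\cO_{\Spc_\otimes(\cT)})$, and this is where hypercompleteness enters. Sheaves of $\cO$-module spectra on a space agree with the derived category of sheaves of $\cO$-modules exactly on the hypercomplete part. Since $\Spc_\otimes(\cT)$ is hypercomplete, the two agree. This yields $m_{\cT,\otimes}$, and canonicity is clear because every ingredient is determined by $(\cT,\otimes)$ and its enhancement.

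For the second statement, take $\cT=\Perf(Y)$ with $Y$ Noetherian. Balmer's theorem identifies $\Spc_\otimes(\cT)$ with $Y$ and, for each quasi-compact open $U$, identifies $\cT|_U$ with $\Perf(U)$ via Thomason--Trobaugh localization. This identification also holds at the enhanced level. Then $\Hom_{\Perf(U)}(\cO_U, E|_U)=R\Gamma(U,E)$. The sheafification of $U\mapsto R\Gamma(U,E)$ is $E$ itself, because $E$ is already a (hyper)sheaf. So $m_{\cT,\otimes}$ is the inclusion $\Perf(Y)\hookrightarrow D(\cO_Y)$.

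The main obstacle I expect is the coherence bookkeeping: making $U\mapsto \cT|_U$ and the $\Hom(\mathbb 1,-)$ functors into an honest functor of $\infty$-categories over the poset of opens, with module structures compatible under restriction. I would handle this by working in presentable categories. Replace $\cT|_U$ by $\mathrm{Ind}(\cT)$ localized at $\mathrm{Ind}(\cT_Z)$, realized as modules over the idempotent $E_\infty$-algebra $\mathbb 1_U$. Then restriction is tensoring with $\mathbb 1_U$, and all compatibilities come from the symmetric monoidal structure automatically. A secondary point is checking that hypercompleteness is exactly what matches sheaves of module spectra with $D(\cO)$. I would deduce this from Lurie's comparison of $D(\Sh)$ with hypercomplete sheaves of spectra.
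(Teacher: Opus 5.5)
Your proposal is correct and follows the paper's argument essentially step for step: the paper also takes the enhanced Verdier quotients $\cT/\bigcap_{\cP\in U}\cP$ (these equal your $\cT/\cT_Z$, and idempotent completion does not change $\Hom(1,-)$), forms the presheaf $U\mapsto \Hom(1_U,E|_U)$ of modules over $U\mapsto \End(1_U)$, sheafifies, and uses hypercompleteness plus classicality to land in $D(\cO)$, with the $\Perf$ case reduced to $U\mapsto \Hom(\cO_U,E|_U)$ sheafifying to $E$.

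One caution concerns your fallback of writing $\mathrm{Ind}(\cT)/\mathrm{Ind}(\cT_Z)$ as modules over an idempotent algebra $1_U$. This requires the finite localization to be smashing. That holds for rigid $\cT$ such as $\Perf(Y)$, but fails in general: for $\mathrm{Fun}(\Delta^1,\Perf_\bK)$ with pointwise tensor, localizing away from the ideal of objects $a\to 0$ gives $1_U=1$. The fallback is also unnecessary, since the space of maps under $\cT$ out of a Verdier quotient of $\cT$ is empty or contractible, so $U\mapsto \cT/\cT_Z$ is automatically a functor to $(Cat^{st}_\bK)_{\cT/}$, which is all the paper uses.
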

See the body of the paper for the precise meaning of the assumptions.

\subsection{A Consequence on homological mirror symmetry}
Let $X$ be a symplectic geometry with some additional data to define the $\bZ$-graded $\bK$-linear Fukaya category $\Fuk(X)$.

\begin{corollary}
Suppose $F\colon \Fuk(X)\xrightarrow{\cong}D^b\coh(Y)$ for a smooth variety $Y$. The monoidal structure $\otimes_F$ on $\Fuk(X)$ induced from the canonical one on $D^b\coh(Y)$ recovers $F$, i.e., $m_{Fuk(X),\otimes_F}=F$.
\end{corollary}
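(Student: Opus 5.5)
The plan is to reduce the corollary to the second part of Theorem~\ref{theorem:Balmer_functor} by transport of structure along $F$. Equip $\Fuk(X)$ with the monoidal structure $\otimes_F$, defined by $A\otimes_F B := F^{-1}(F(A)\otimes^{\mathbb{L}}_{\cO_Y} F(B))$ together with the transported unit, associators and symmetry. By construction $F$ becomes a monoidal equivalence $(\Fuk(X),\otimes_F)\xrightarrow{\cong}(D^b\coh(Y),\otimes^{\mathbb{L}})$, compatible with the enhancements because $F$ is an enhanced (dg or $A_\infty$) equivalence. Since $Y$ is smooth, it is regular, so $D^b\coh(Y)=\Perf(Y)$ with its standard tensor structure. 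This places us in the setting of the second statement of Theorem~\ref{theorem:Balmer_functor}.

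The first step is naturality of the construction $m_{\cT,\otimes}$ under enhanced monoidal equivalences. Suppose $\Phi\colon(\cT,\otimes)\to(\cT',\otimes')$ is such an equivalence. Then $\Phi$ induces a bijection on prime $\otimes$-ideals, $\mathcal{P}\mapsto\Phi(\mathcal{P})$. This bijection is a homeomorphism $\phi\colon\Spc_{\otimes}(\cT)\cong\Spc_{\otimes'}(\cT')$, because supports are preserved. The structure sheaves are built from the endomorphisms of units on Verdier localizations, so $\Phi$ identifies them. Hence $\phi$ is an isomorphism of ringed spaces, and because $\Phi$ is enhanced it also identifies the higher structure sheaves. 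The hypercompleteness and classicality hypotheses therefore transfer from $\cT'$ to $\cT$. I would then check, directly from whatever construction of $m$ the body gives, that
\[ \phi_* \circ m_{\cT,\otimes} \simeq m_{\cT',\otimes'}\circ \Phi. \]
I expect $m$ to send an object $A$ to a sheaf built by localizing at opens of the spectrum, roughly $U\mapsto \mathrm{Hom}(\mathbf{1},A)$ computed in $\cT/\cT_{Z}$ with $Z$ the complement of $U$. Every ingredient here — the unit, the Verdier quotients by thick tensor ideals supported on closed subsets, and the enhanced mapping complexes — is transported by $\Phi$, so the square commutes canonically.

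The second step applies the theorem with $\Phi=F$ and $\cT'=\Perf(Y)$. Balmer's reconstruction identifies $\Spc_{\otimes}(\Perf(Y))\cong Y$ as ringed spaces. Because $Y$ is a scheme, its higher structure sheaf is classical, and a Noetherian scheme of finite Krull dimension is hypercomplete in the relevant sense, so the hypotheses hold. The theorem then says $m_{\Perf(Y),\otimes}$ is the standard inclusion $\Perf(Y)\hookrightarrow D(\cO_Y)$. Combining this with the naturality square gives $m_{\Fuk(X),\otimes_F}=F$, where $\Spc_{\otimes_F}(\Fuk(X))$ is identified with $Y$ via $\phi$ and $D^b\coh(Y)$ is viewed inside $D(\cO_Y)$.

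I expect the main obstacle to be the naturality step, specifically making the equality canonical at the level of enhanced functors rather than merely objectwise. The issue is that $m$ is presumably built using $\infty$-categorical sheafification together with the hypercompleteness assumption. My first approach would be to show that every step of the construction is functorial for equivalences of small idempotent-complete stable $\infty$-categories with symmetric monoidal structure; naturality would then be automatic. The equality in the statement is understood to hold up to this canonical identification $\Spc_{\otimes_F}(\Fuk(X))\cong Y$.
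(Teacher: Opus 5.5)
Your proposal is correct and matches the paper's proof: the paper's argument is exactly the commutative triangle $m_{\Fuk(X),\otimes_F}\simeq m_{D^b\coh(Y),\otimes}\circ F$, i.e.\ naturality of the canonical functor under the monoidal equivalence $F$, combined with the second part of Theorem~\ref{theorem:Balmer_functor}. You additionally spell out steps the paper leaves implicit: $D^b\coh(Y)=\Perf(Y)$ for smooth $Y$, the transport of the monoidal structure along $F$, and the identification $\Spc_{\otimes_F}(\Fuk(X))\cong Y$ along which the triangle is read.
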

\begin{proof}
    We have the following commutative diagram
    \begin{equation}
        \xymatrix{
        \Fuk(X)\ar[r]^F_{\cong}\ar[dr]_{m_{\Fuk(X),\otimes_F}} & D^b\coh(Y)\ar[d]^{m_{D^b\coh(Y), \otimes}}\\
        &D(\cO_{Y}).
        }
    \end{equation}
    By Theorem~\ref{theorem:Balmer_functor}, the right vertical arrow is the standard inclusion. This completes the proof.
\end{proof}

\subsection*{Acknowledgment}
This work was inspired by some discussion with some people including Daigo Ito,  Hayato Morimura, Genki Ouchi, Atsushi Takahashi, and Hiro Lee Tanaka. I thank all of them. The author would like to give special thanks to Genki Ouchi for many valuable comments and stimulating discussion on the draft.
This work is supported by JSPS KAKENHI Grant Numbers 22K13912, 23K25765, and 	25K21660.

\section{Construction}

\subsection{Balmer spectrum}
Let $\bK$ be a field.
Let $\cT$ be a tensor triangulated category over $\bK$. We denote the tensor (=monoidal operation) of $\cT$ by $\otimes$, and the tensor unit by $1_\cT$. Balmer~\cite{Balmer} constructed a ringed space from $\cT$, called the \emph{Balmer spectrum}. The definition is as follows.

A triangulated subcategory of $\cT$ is \emph{thick} if it is closed under taking direct summands. A thick subcategory $\cS$ is \emph{prime thick ideal} if the following conditions are satisfied,
\begin{enumerate}
    \item $\cE\in \cS, \cF\in \cT$ implies that $\cE\otimes \cF\in \cS$.
    \item  $\cE\otimes \cF\in \cS$ implies $\cE\in \cS$ or $\cF\in \cS$.
\end{enumerate}
Let $\Spc_{\otimes}(\cT)$ be the set of prime thick ideals of $\cT$.

Let $\bE$ be a set of objects in $\cT$. We set
\begin{equation}\label{eq:Balmer_basis}
    Z(\bE):=\lc \cS\in \Spc_{\otimes}(\cT)\relmid \cS\cap \bE= \varnothing\rc.
\end{equation}
It is easy to see $Z(\varnothing)=\Spc_{\otimes}(\cT), Z(\mathrm{Ob}(\cT))=\varnothing, \bigcap_iZ(\bE_i)=Z(\bigcup_i\bE_i), Z(\bE_1)\cap Z(\bE_2)=Z(\lc\cE_1\oplus \cE_2\relmid \cE_1\in \bE_1, \cE_2\in \bE_2 \rc)$. Hence $Z$ defines a topology on $\Spc_{\otimes}(\cT)$, called \emph{Balmer topology}.

For each open subset $U\subset \Spc_\otimes(\cT)$, we assign $\End_{\cT_U}(1_{U})$ where $1_{U}$ is the image of the monoidal unit $1_\cT$ by the quotient functor $[-]_U\colon \cT\to \cT_U:=\cT/\bigcap_{\cP\in U}\cP$. This forms a presheaf on $\Spc_\otimes(\cT)$. The sheafification is denoted by $\cO_{\cT, \otimes}$ and is called the structure sheaf on $\Spc_{\otimes}(\cT)$. By a general nonsense, this forms a sheaf of commutative $\bK$-algebras on $\Spc_{\otimes}(\cT)$.

\begin{definition}[Balmer spectrum] The ringed space $(\Spc_\otimes(\cT), \cO_{\cT,\otimes})$ is the \emph{Balmer spectrum} of $\cT$.
\end{definition}

\subsection{The canonical functor}
In the following, we work in an enhanced setting. Note that infinity-categorical instances of Balmer spectrum are also discussed in e.g., \cite{matsukawa2025spectrumstableinfinitycategories, aoki2025higherzariskigeometry}.

Let $\cC$ be a symmetric monoidal stable $\infty$-category over $\bK$. Then the homotopy category $\pi_0(\cC)$ is a tensor triangulated category over $\bK$, and one can consider its Balmer spectrum $(\Spc_\otimes(\pi_0(\cC)), \cO_{\pi_0(\cC), \otimes})$. For this ringed space, one can form the derived category of $\cO_{\pi_0(\cC), \otimes}$-modules $D(\cO_{\pi_0(\cC),\otimes})$.

Let $\mathrm{Op}_{\Spc_{\otimes}(\pi_0(\cC))}$ be the poset (will be viewed as an $\infty$-category) of the open subsets of $\Spc_{\otimes}(\pi_0(\cC))$. Then the assignment $U\mapsto \cC_U$ forms a functor 
\begin{equation}
    \cC_\bullet\colon \mathrm{Op}_{\Spc_{\otimes}(\pi_0(\cC))}^{op}\to (Cat^{st}_{\bK})_{\cC /}.
\end{equation}
Here $(Cat^{st}_{\bK})_{\cC /}$ is the category of stable categories under $\cC$.
Note that the sheafification of this functor (without $\cC/$) appears in \cite{aoki2025higherzariskigeometry} and is called the structure sheaf in higher Zariski geometry.

Since $\cC$ is pointed by the unit, we have a functor
\begin{equation}
    (Cat^{st}_{\bK})_{\cC /}\to Fun(\cC,\Mod(\bK)); \cC'\mapsto \Hom_{\cC'}(1_{\cC'}, -)
\end{equation}
where $Fun(\cC,\Mod(\bK))$ is the functor category from $\cC$ to the derived $\bK$-modules $\Mod(\bK)$ (i.e., the category of $\cC^{op}$-modules over $\bK$), $1_{\cC'}$ is the image of the unit $1_\cC$ under the structure map $\cC\to \cC'$. Composing these two functors, we obtain a functor from  $\mathrm{Op}_{\Spc_{\otimes}(\pi_0(\cC))}^{op}$ to $Fun(\cC,\Mod(\bK))$. In other words, we obtain
\begin{equation}
    m_{\cC}^{pre}\colon \cC\to \mathrm{Psh}(\Spc_{\otimes}(\pi_0(\cC)), \Mod(\bK)).
\end{equation}
An object $U\mapsto \End(1_{\cC_U})$ defines an algebra object $\cO_\cC^{pre}$ in $\mathrm{Psh}(\Spc_{\otimes}(\pi_0(\cC)), \Mod(\bK))$. By the definition, $m_{\cC}^{pre}$ lands in the modules over $\cO_\cC^{pre}$ in $\mathrm{Psh}(\Spc_{\otimes}(\pi_0(\cC)), \Mod(\bK))$.

By applying the sheafification, we obtain $ m_{\cC}\colon \cC\to \Sh(\Spc_{\otimes}(\pi_0(\cC)), \cO_C)$ where $\Sh(\Spc_{\otimes}(\pi_0(\cC)), \cO_C)$ is the category of modules over the sheafification $\cO_\cC$ of $\cO_\cC^{pre}$ in $\Sh(\Spc_{\otimes}(\pi_0(\cC)), \Mod(\bK))$. We call $\cO_{\cC}$, the \emph{higher structure sheaf} of $\cC$.

\begin{definition}\label{def:canonical}
    We call $m_\cC$, the \emph{canonical functor}.
\end{definition}

\begin{remark}
    A similar construction at the level of homotopy category is due to Rowe~\cite{Rowe}, and is called the associated sheaf functor. The canonical functor here can be considered as a higher version of the associated sheaf functor.
\end{remark}

\begin{remark}
Although we do not use in the rest of the paper, let us state some formal properties of $m_\cC$.
\begin{enumerate}
\item From the definition, it is clear that $m_\cC$ is lax monoidal.
    \item     One can check that the canonical functor is fully faithful if $\cC$ is locally monogenetic and $\Hom_{\cC_\bullet}(1_{\cC_\bullet}, c)$ satisfies a certain the sheaf condition for any $c\in \cC$. 
\end{enumerate}

\end{remark}

\begin{assumption}\label{assumption:tame_geometry}
    \begin{enumerate}
            \item $\Spc_{\otimes}(\pi_0(\cC))$ is hypercomplete.
        \item The higher structure sheaf  is classical i.e., $\pi_0(\cO_\cC)\simeq \cO_\cC$. Hence $\cO_\cC\cong \cO_{\pi_0(\cC)}$.
    \end{enumerate}
\end{assumption}
\begin{lemma}
    Under Assumption~\ref{assumption:tame_geometry}, by construction, the canonical functor upgrades to $m_{\cC}\colon \cC\to D(\cO_{\pi_0(\cC)})$.
\end{lemma}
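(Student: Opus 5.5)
The plan is to pass through the comparison between sheaves of modules valued in $\Mod(\bK)$ and the derived category of a ringed space. By construction, $m_\cC$ takes values in $\Sh(\Spc_{\otimes}(\pi_0(\cC)), \cO_\cC)$, the $\infty$-category of $\cO_\cC$-module objects in $\Sh(\Spc_{\otimes}(\pi_0(\cC)), \Mod(\bK))$. So it suffices to produce an equivalence, or at least a canonical functor,
\[
\Sh(\Spc_{\otimes}(\pi_0(\cC)), \cO_\cC)\to D(\cO_{\pi_0(\cC)})
\]
and to compose $m_\cC$ with it.

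\textbf{Step 1: replace $\cO_\cC$ by the classical structure sheaf.} By Assumption~\ref{assumption:tame_geometry}(2), $\cO_\cC$ is discrete and $\cO_\cC\simeq\pi_0(\cO_\cC)$. I would first check that $\pi_0$ of the sheafification of $U\mapsto\End_{\cC_U}(1_{\cC_U})$ is the sheafification of $U\mapsto \pi_0\End_{\cC_U}(1_{\cC_U})=\End_{\pi_0(\cC)_U}(1_U)$. This uses that the Verdier quotient of $\pi_0(\cC)$ is $\pi_0$ of the stable quotient, together with the fact that the heart-valued sheafification commutes with $\pi_0$ on presheaves. It follows that $\cO_\cC$ is identified, as an $E_\infty$-algebra in sheaves, with the ordinary sheaf of commutative rings $\cO_{\pi_0(\cC),\otimes}$. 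Hence the target becomes $\Mod_{\cO_{\pi_0(\cC)}}(\Sh(\Spc_{\otimes}(\pi_0(\cC)),\Mod(\bK)))$.

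\textbf{Step 2: compare with the derived category.} For a ringed space $(S,\cO)$ with $\cO$ discrete, there is a canonical functor from $D(\cO)$, the derived $\infty$-category of the Grothendieck abelian category of $\cO$-modules, to $\cO$-modules in $\Mod(\bK)$-valued sheaves on $S$. This functor identifies the target with the hypercompletion of $D(\cO)$ (Lurie, SAG, \S2.1 and \S1.3). If $S$ is hypercomplete, in the sense of Assumption~\ref{assumption:tame_geometry}(1), then $\Sh(S,\Mod(\bK))$ is already hypercomplete. The functor is then an equivalence, because both sides are left-complete and $t$-exact with the same heart of discrete $\cO$-modules. Composing $m_\cC$ with the inverse of this equivalence gives $m_\cC\colon\cC\to D(\cO_{\pi_0(\cC)})$.

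\textbf{Expected obstacle.} The main difficulty is Step 2, the identification of hypercomplete $\cO$-module sheaves with $D(\cO)$. The standard statement is for sheaves of spectra or $\bK$-modules, without a module structure. I would first prove it for $\cO=\underline{\bK}$. I would then upgrade it to modules by noting that both sides are monadic over the $\cO=\underline{\bK}$ case via the forgetful functor, with the monad $\cO\otimes-$ matching on the two sides. If that is too delicate, the fallback is to define the functor only on the essential image of $m_\cC$, using the Grothendieck prestable structure. That suffices, since the lemma only asks that $m_\cC$ upgrade.
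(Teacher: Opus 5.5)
Your proposal follows the paper's route: hypercompleteness of $\Spc_{\otimes}(\pi_0(\cC))$ identifies $\Mod(\bK)$-valued sheaves with the derived category of $\bK$-module sheaves, and classicality of $\cO_\cC$ (whose identification with $\cO_{\pi_0(\cC)}$ your Step~1 justifies) upgrades this to an equivalence between $\cO_\cC$-modules and $D(\cO_{\pi_0(\cC)})$, which the paper simply asserts and you make explicit via monadicity over the $\underline{\bK}$ case. One small correction to Step~2: the comparison identifies $D(\cO)$ with the hypercomplete part of the sheaf-side module category (not the other way round), and the equivalence comes from both sides being separated and $0$-complicial with the same heart, not from left-completeness, which is not automatic here and would not be enough on its own.
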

\begin{proof}
    By the hypercompleteness,  $\Sh(\Spc_{\otimes}(\pi_0(\cC)), \Mod(\bK))$ is the same as the derived category of the category of $\bK$-module sheaves over $\Spc_{\otimes}(\pi_0(\cC))$. By the second assumption, the algebra object $\cO_\cC$ is defined in the heart of $\Sh(\Spc_{\otimes}(\pi_0(\cC)), \Mod(\bK))$. This means that $\Sh(\Spc_{\otimes}(\pi_0(\cC)), \Mod(\bK))\cong D(\cO_{\pi_0(\cC)})$.
\end{proof}

\begin{comment}

The fundamental question is the following:
\begin{question}
    When is $m_\cT$ an embedding? 
\end{question}
\begin{corollary}
    If $\cT$ is a bounded derived category of coherent sheaves over a noetherian scheme, then $m_\cT$ is fully faithful.
\end{corollary}

\begin{proof}
This is essentially Balmer's reconstruction.
\end{proof}
   
\end{comment}

\subsection{Relation to Balmer's reconstruction}
Let $X$ be a Noetherian scheme. The category of perfect complexes $\Perf(X)$ over $X$ carries a standard monoidal structure and an enhancement. As observed in \cite{Balmer}, the Balmer spectrum of $\Perf(X)$ recovers $X$ as a ringed space. 
\begin{proposition}\label{prop:embedding}
    The functor $m_{\Perf(X)}$ is equivalent to the inclusion $\Perf(X)\hookrightarrow D(\cO_X)$.
\end{proposition}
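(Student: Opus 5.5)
We identify the open subsets of $\Spc_{\otimes}(\Perf(X))$ with the Zariski open subsets of $X$ via Balmer's homeomorphism $X\cong \Spc_{\otimes}(\Perf(X))$, $x\mapsto \cP_x:=\{\cE\mid \cE_x\simeq 0\}$. Under this identification, for an open (quasi-compact) $U\subset X$ we have $\bigcap_{x\in U}\cP_x=\Perf_{X\setminus U}(X)$, the perfect complexes supported on $X\setminus U$. The plan is to compute the presheaf-level functor $m^{pre}_{\Perf(X)}$ explicitly, and then to compare its sheafification with the inclusion.

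\textbf{Step 1 (Thomason–Trobaugh localization).} For quasi-compact open $U$, I would invoke the Thomason–Trobaugh localization theorem in its enhanced form. It says that the restriction $\Perf(X)\to\Perf(U)$ induces an equivalence of the idempotent completion of the Verdier quotient $\Perf(X)/\Perf_{X\setminus U}(X)$ with $\Perf(U)$. The quotient itself is fully faithful into $\Perf(U)$, which is all that matters for Hom computations. Hence for $\cE\in\Perf(X)$,
\[
m^{pre}_{\Perf(X)}(\cE)(U)=\Hom_{\Perf(X)_U}(1_U,\cE|_U)\simeq \Hom_{\Perf(U)}(\cO_U,\cE|_U)=R\Gamma(U,\cE).
\]
These identifications are functorial in $U$, since restriction functors compose, and in $\cE$. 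This functoriality comes for free once everything is formulated as a natural transformation of functors $\mathrm{Op}^{op}\to(Cat^{st}_{\bK})_{\cC/}$ into the functor $U\mapsto\Perf(U)$. In particular $\cO^{pre}(U)\simeq R\Gamma(U,\cO_X)$ as algebras, and the module structures match.

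\textbf{Step 2 (sheafification).} The presheaf $U\mapsto R\Gamma(U,\cE)$ on quasi-compact opens is already a (hypercomplete) sheaf, by Zariski descent for quasi-coherent cohomology; quasi-compact opens form a basis. Its sheafification is therefore $\cE$ itself, viewed in $D(\cO_X)\simeq \Sh(X,\Mod(\bK))$-modules over $\cO_X$. Applied to $\cE=\cO_X$, the higher structure sheaf is $\cO_X$, which is classical; this verifies Assumption~\ref{assumption:tame_geometry} for Noetherian $X$ of finite Krull dimension. Hypercompleteness follows from finite dimensionality of the Noetherian space. I would remark that hypercompleteness is the only place where a dimension hypothesis may be needed. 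Thus $m_{\Perf(X)}(\cE)\simeq\cE$ naturally, which is exactly the inclusion $\Perf(X)\hookrightarrow D(\cO_X)$.

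\textbf{Main obstacle.} The main obstacle is coherence in Step 1. We need the identification with $R\Gamma(U,-)$ as a natural equivalence of functors
\[
\Perf(X)\to \mathrm{Mod}_{\cO^{pre}}(\mathrm{Psh})
\]
at the $\infty$-categorical level, not merely objectwise. My approach is to construct a natural transformation of diagrams $\Perf(X)_\bullet\to\Perf(\bullet)$ under $\Perf(X)$, using the universal property of the Verdier quotient: restriction kills $\Perf_{X\setminus U}(X)$. I would then apply the pointing-by-unit functor to this transformation, and check that it is an equivalence objectwise by Thomason–Trobaugh. Non-quasi-compact opens are handled by the fact that sheaves are determined on a basis.
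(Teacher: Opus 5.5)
Your proposal is correct and follows the paper's argument. The paper's one-line proof (Noetherian schemes satisfy Assumption~\ref{assumption:tame_geometry}, and the sheafification of $U\mapsto\Hom(\cO_U,\cE_U)$ is $\cE$) tacitly relies on your Thomason--Trobaugh identification $\Hom_{\Perf(X)_U}(1_U,\cE_U)\simeq R\Gamma(U,\cE)$ and on Zariski descent, which you make explicit along with naturality in $U$ and $\cE$.

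Your finite-Krull-dimension caveat for hypercompleteness is more careful than the paper, which asserts the assumption for every Noetherian scheme without comment. You could probably drop that hypothesis by observing that each $U\mapsto R\Gamma(U,\cE)$ is already a hypersheaf, so $m_{\Perf(X)}$ lands in hypercomplete $\cO_X$-modules, i.e.\ in $D(\cO_X)$.
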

\begin{proof}
Note that any Noetherian scheme satisfies Assumption~\ref{assumption:tame_geometry}.
Now the assertion follows from the fact that the sheafification of $U\mapsto \Hom(\cO_U, \cE_U)$ is $\cE$.
\end{proof}

\begin{remark}[Matsui spectrum]
    Even if one does not have a monoidal structure, there is an analogue called \emph{Matsui spectrum}, which is again a ringed space. Matsui spectrum does not reconstruct varieties in general, but has some interesting partial information. See e.g.~\cite{matsuiringedspace,HiranoOuchi1,HiranoOuchi2,ItoMatsui,Ito}. Since our construction of the canonical functor uses a special object ``monoidal unit", to construct a similar functor for Matsui spectrum, one needs to choose an object playing the role of monoidal unit. We will not explore this direction in this note, since it seems to be a bit weak for our purpose (=HMS).
\end{remark}

{\small
\bibliographystyle{alpha}
\bibliography{bibs}}

\end{document}